\newcommand{\R}{\mathbb{R}}
\newcommand{\C}{\mathbb{C}}
\newcommand{\f}{\rightarrow}
\newcommand{\deb}{\bar\partial}
\newcommand{\hilb}{\mathcal{H}}
\newcommand{\de}{\partial}
\newcommand{\K}{K\"{a}hler}
\newcommand{\lmb}{\lambda}
\newcommand{\g}{h}
\newcommand{\re}{\mathop{\mathrm{Re}}}
\newcommand{\Aut}{\operatorname{Aut}}
\newcommand{\Isom}{\operatorname{Isom}}
\newtheorem{thm}{Theorem}
\newtheorem{prop}{Proposition}
\newtheorem{lem}[prop]{Lemma}
\newtheorem{exmp}[prop]{Example}
\newtheorem{rmk}[prop]{Remark}
\newcommand{\A}{\mathcal A}
\newcommand{\ov}[1]{\overline{#1}}
\newcommand{\ol}{\mathrm{Hol}}
\newcommand{\pf}{\textit{Proof. }}
\newcommand{\set}[2]{ \left\{\,#1\,;\,#2\,\right\} }
\newcommand{\nc}{\newcommand}
\nc{\ep}{\varepsilon}
\nc{\iu}{\sqrt{-1}} 
\nc{\integer}{\mathbb{Z}}
\nc{\HBD}{M}
\nc{\Siegel}{D}
\nc{\Hil}{\mathcal{L}}
\nc{\gs}{\mathfrak{s}}
\begin{document}

\title[ Berezin quantization of  homogeneous bounded domains]
{Berezin quantization of  homogeneous bounded domains}
\author[A. Loi, R. Mossa]{Andrea Loi,  Roberto Mossa}
\address{Dipartimento di Matematica e Informatica, Universit\`{a} di Cagliari,
Via Ospedale 72, 09124 Cagliari, Italy}
\email{loi@unica.it;   roberto.mossa@gmail.com }
\thanks{Research partially supported by GNSAGA (INdAM) and MIUR of Italy;
the first author  thank  ESF for short visit grants within the program ``Contact and Symplectic Topology".}
\date{\today}
\subjclass[2000]{53D05; 53C55; 58F06}
\keywords{K\"{a}hler metrics; Berezin quantization; bounded homogeneous domain; Calabi's diastasis function}.

\begin{abstract}
We prove that a homogeneous bounded domain admits a Berezin quantization.
\end{abstract}

\maketitle

\section{Introduction}
Let $\Omega$ be a non-compact complex manifold of dimension $n$, $\omega=\frac{i}{2}\partial\bar\partial\Phi$   a real analytic  \K\  form on 
$\Omega$,  $g$ the corresponding \K\ metric and
$K_{\lambda}(x, y)$ ($\lambda$ a positive constant)  the reproducing kernel for the Bergman space ${\mathcal H}_{\lambda}$ of all holomorphic  functions on $\Omega$ square-integrable against the measure $e^{-\lambda\Phi}\frac{\omega^n}{n!}$. Under the following conditions:
\begin{enumerate}
\item
for all suffciently large real number $\lambda$ there exists a positive constant $c_{\lambda}$ such that  $K_{\lambda}(x, \bar x)=c_{\lambda}e^{\lambda\Phi(x)}$,
\item
the function 
\begin{equation}
e^{-\Phi(x,\ov x) - \Phi(y,\ov y) + \Phi(x,\ov y) + \Phi(y,\ov x)}\nonumber
\end{equation}
is globally defined on $\Omega\times\Omega$, $\leq 1$  and equals $1$ if and only if $x=y$,
(where  $\Phi(x,\ov y)$ be a   sesquianalytic extension  on a neighbourhood  of the diagonal in $\Omega \times \Omega$
of $\Phi$),
 \end{enumerate}
F. A. Berezin \cite{berezin} was able to establish a quantization procedure on ($\Omega, \omega)$. 
His seminal paper  has inspired several  interesting papers both from the mathematical and physical point of view 
(see, e.g.,  M. Engli\v{s}' work \cite{englis},  for a beautiful extension of Berezin's method to complex  domains which satisfiy condition (1) 
only asympotically and 
\cite{cgr1},  \cite{cgr2},  
\cite{cgr3}, \cite{cgr4} for a quantum geometric interpretation of Berezin quantization  and its extension to the compact case (cfr. also
the final  remark  at the end of the paper)).

The only known instances when the above conditions (1) and (2)  are satisfied, however, are just $\Omega = \C^n$ and $\Omega$ a bounded symmetric domain (with the euclidean and the Bergman metric, respectively). 


\vskip 0.3cm
In this paper we show that these conditions are satisfied by {\em any } homogeneous bounded domain $\Omega\subset \C^n$ equipped with a homogeneous \K\ form $\omega$ (not necessarily the Bergman one).  Our main result is then the  following:

\begin{thm}\label{mainteor}
A bounded homogeneous domain $(\Omega, \omega)$ admits a Berezin quantization.
\end{thm}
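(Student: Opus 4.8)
The plan is to verify Berezin's two conditions (1) and (2) directly, exploiting the transitive action of a group $G$ of biholomorphisms of $\Omega$ preserving $\omega$, which is exactly the homogeneity of the \K\ form.

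First I would establish (1) by showing that the distortion function $\epsilon_\lambda(x) = K_\lambda(x,\bar x)\,e^{-\lambda\Phi(x)}$ is constant. Fix $\phi \in G$. Since $\phi^*\omega = \omega$ and $\omega = \frac{i}{2}\partial\bar\partial\Phi$, the difference $\Phi\circ\phi - \Phi$ is pluriharmonic, so $\Phi(\phi(x)) = \Phi(x) + 2\re f_\phi(x)$ for some holomorphic $f_\phi$. A change-of-variables computation then shows that the multiplier operator $(U_\phi s)(x) = e^{-\lambda f_\phi(x)}\, s(\phi(x))$ is unitary on $\mathcal{H}_\lambda$: the weight $e^{-\lambda\Phi}$ absorbs the factor $e^{-2\lambda\re f_\phi}$, and the volume $\frac{\omega^n}{n!}$ is $G$-invariant. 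Expanding $K_\lambda$ in an orthonormal basis and in its image under $U_\phi$ yields the transformation law $K_\lambda(x,\bar y) = e^{-\lambda f_\phi(x)}\,\overline{e^{-\lambda f_\phi(y)}}\,K_\lambda(\phi(x),\overline{\phi(y)})$; setting $y=x$ and using the cocycle relation for $\Phi$ gives $\epsilon_\lambda(\phi(x)) = \epsilon_\lambda(x)$. Since $G$ is transitive, $\epsilon_\lambda$ is a constant $c_\lambda$, which is precisely (1). Here I would also need $\mathcal{H}_\lambda \neq \{0\}$ for all large $\lambda$, so that $c_\lambda>0$; this is where the boundedness of $\Omega$ and a comparison with the Bergman metric enter.

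Next I would deduce (2) from (1). By the identity theorem for sesquianalytic functions, condition (1) upgrades on a \ngh\ of the diagonal to $K_\lambda(x,\bar y) = c_\lambda e^{\lambda\Phi(x,\bar y)}$. Substituting this into the exponent in (2) and using the Hermitian symmetry $K_\lambda(y,\bar x) = \overline{K_\lambda(x,\bar y)}$ gives, near the diagonal,
\begin{equation}
e^{-\Phi(x,\bar x) - \Phi(y,\bar y) + \Phi(x,\bar y) + \Phi(y,\bar x)} = \left(\frac{|K_\lambda(x,\bar y)|^2}{K_\lambda(x,\bar x)\,K_\lambda(y,\bar y)}\right)^{1/\lambda}.\nonumber
\end{equation}
The right-hand side is globally defined and real-analytic on $\Omega\times\Omega$ (the denominator is positive by (1)), so it supplies the required global extension. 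The Cauchy--Schwarz inequality $|K_\lambda(x,\bar y)|^2 = |\langle K_{\lambda,y},K_{\lambda,x}\rangle|^2 \le \|K_{\lambda,x}\|^2\,\|K_{\lambda,y}\|^2 = K_\lambda(x,\bar x)\,K_\lambda(y,\bar y)$ for the coherent states $K_{\lambda,x}(\cdot)=K_\lambda(\cdot,\bar x)$ shows the expression is $\le 1$, with equality iff $K_{\lambda,x}$ and $K_{\lambda,y}$ are proportional.

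It then remains to show that proportionality forces $x=y$, i.e. that the coherent-state map $x\mapsto \mathbb{C}\,K_{\lambda,x}$ is injective; this is the step I expect to be the main obstacle. I would handle it by choosing $\lambda$ large enough that $\mathcal{H}_\lambda$ separates points of $\Omega$ — using that on a bounded domain the bounded holomorphic functions separate points and can be multiplied into $\mathcal{H}_\lambda$ — so that distinct $x,y$ yield non-proportional reproducing kernels. Together with the previous steps this gives ``$=1$ iff $x=y$'' and completes the verification of (1) and (2), whence $(\Omega,\omega)$ admits a Berezin quantization.
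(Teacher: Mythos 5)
Your overall architecture (verify Berezin's conditions (1) and (2), with (1) coming from homogeneity plus non-triviality of $\mathcal{H}_\lambda$, and (2) coming from the kernel identity $K_\lambda(x,\bar y)=c_\lambda e^{\lambda\Phi(x,\bar y)}$) is sound, and your treatment of condition (2) is a legitimate variant of the paper's: writing $e^{-D_g(x,y)}$ as $\bigl(|K_\lambda(x,\bar y)|^2/(K_\lambda(x,\bar x)K_\lambda(y,\bar y))\bigr)^{1/\lambda}$ and applying Cauchy--Schwarz is literally the same computation the paper performs in projective language via the coherent states map $\varphi_\lambda:\Omega\to\C P^{\infty}$, the hereditary property of Calabi's diastasis, and the formula $e^{-D_{g_{FS}}}\le 1$. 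Your proposed injectivity argument is actually \emph{more} elementary than the paper's: since $\Omega$ is bounded, the coordinate functions $z_j$ are bounded multipliers of $\mathcal{H}_\lambda$, and proportionality $K_{\lambda,x}=cK_{\lambda,y}$ together with $K_\lambda(y,\bar y)>0$ forces $z_j s_0(x)=\bar c\, z_j s_0(y)$ and $s_0(x)=\bar c\, s_0(y)\ne 0$, hence $x=y$. (The paper instead invokes the theorem of Di Scala--Ishi--Loi that any K\"ahler immersion of a homogeneous K\"ahler manifold into $\C P^{\infty}$ is injective, which is far more general but also far heavier.) One small caution: mere ``separation of points by $\mathcal{H}_\lambda$'' is not quite what you need; you need the evaluation functionals at $x$ and $y$ to be non-proportional, which is exactly what the multiplier argument delivers.

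The genuine gap is the step you dispose of in one clause: ``$\mathcal{H}_\lambda\neq\{0\}$ for all large $\lambda$ \dots\ this is where the boundedness of $\Omega$ and a comparison with the Bergman metric enter.'' This is in fact the technical core of the paper (all of Section 2). The metric $g$ is an \emph{arbitrary} homogeneous K\"ahler metric, not the Bergman one, and on a non-symmetric homogeneous domain the cone of invariant K\"ahler metrics has dimension $r=\dim\mathfrak{a}$; two such metrics have potentials whose boundary behaviour is governed by independent parameters $\gamma_1,\dots,\gamma_r$, so there is no uniform two-sided comparison with the Bergman potential that would let you transfer integrability of $e^{-\lambda\Phi}\frac{\omega^n}{n!}$ from the Bergman case. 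The paper proves non-triviality by transporting $\mathcal{H}_{\lambda\Phi}$ to a space $\Hil(S,\lambda f)$ of functions on the split solvable group $S$ acting simply transitively on $\Omega$ (Proposition \ref{prop:polarization}), and then applying the Rossi--Vergne criterion $\lambda\gamma_k>1+p_k+b_k+q_k/2$ built on the Piatetskii-Shapiro root decomposition; positivity of the $\gamma_k$ is what makes the criterion hold for all $\lambda\ge\lambda_0$. Without this (or an equivalent) input your argument does not get off the ground: if $\mathcal{H}_\lambda=\{0\}$ then $\epsilon_{\lambda g}\equiv 0$ and condition (1) fails, and the kernel identity underlying your condition (2) is vacuous. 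You should either supply a genuine proof of non-triviality for large $\lambda$ or cite it as the substantive external ingredient it is.
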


Note that condition (1)  can be expressed by saying that Rawnsley's function 
$\epsilon_{\lambda g}(x)=e^{-\lambda\Phi (x)}K_{\lambda}(x, \bar x)$ (see \cite{cgr1})  is a positive constant for all  $\lambda$ sufficiently large
or,  in  a more recent terminology (due to S. Donaldson (\cite{donaldson}) for algebraic manifolds and to the first author and C. Arezzo 
(\cite{arlcomm}) in the noncompact case), that  the   \K\ metric  $g$ is  {\em balanced} for  all $\lambda$
large enough. Moreover,  the function 
 \begin{equation*}
\quad D_g(x,y):=\Phi(x,\ov x) + \Phi(y,\ov y) - \Phi(x,\ov y) - \Phi(y,\ov x)
\end{equation*}
appearing in condition (2)  is the so called  {\em diastasis} function introduced and studied   by  E. Calabi in his seminal paper (\cite{calabi}).

\vskip 0.3cm
The paper  consists of other two sections. 
In the next one we deal with assumption (1) for homogeneous bounded domain, namely  we  study the balanced condition for these domains. More precisely,  we explicitly compute   a real number $\lambda_0$ such that 
$\epsilon_{\lambda g} =c_{\lambda}$ ($c_{\lambda}$ a positive constant)
 iff  $\lambda\geq\lambda_0$.
In Section \ref{quantization}, after briefly  recalling   Berezin's quantization procedure, the definition and the main properties of   Calabi's diastasis function,
we show that the \K\ metric $g$ of a homogeneous bounded domain $\Omega$  satisfies condition (2).
The key tool here,  is the link between  Rawnsley's epsilon function,     Calabi's diastasis function and their  relationship
with  the theory of  \K\ immersions  (following the ideas developed in \cite{cgr1}, \cite{cgr2}, \cite{cgr3} and \cite{cgr4}). Combining the results   of Section \ref{quantization}  with those of Section \ref{balsec} we then  obtain  the proof of Theorem \ref{mainteor}.

\vskip 0.5cm

\noindent {\bf Acknowledgments}. The authors are indebted to
H. Ishi  for the proof of Theorem \ref{thm:balanced} 
describing the structure of balanced  metrics on a homogeneous bounded domain. 

\section{Balanced metrics for homogeneous bounded domains}\label{balsec}
Let $\Omega\subset\C^n$ be a complex domain of $\C^n$  endowed with a K\"ahler metric $g$
and let $\omega$ be the \K\ form associated to $g$, i.e. $\omega (\cdot ,\cdot  )=g(\cdot, J\cdot)$.
Assume that  the metric $g$ can be described by a
strictly plurisubharmonic real valued function $\Phi :\Omega\rightarrow\R$, called a {\em K\"ahler potential} for $g$,
i.e.
$\omega=\frac{i}{2}\de\bar\de \Phi$.

A K\"ahler potential is not unique, in fact it is defined up to an addition with the real part of a holomorphic function on $\Omega$.
Let $\hilb_\Phi$ be the weighted Hilbert space of square integrable holomorphic functions on $\Omega$, with weight $e^{-\Phi}$, namely
\begin{equation}\label{hilbertspacePhi}
\hilb_\Phi=\left\{ s\in\ol(\Omega) \ | \ \, \int_\Omega e^{-\Phi}|s|^2\frac{\omega^n}{n!}<\infty\right\},
\end{equation}
If $\hilb_\Phi\neq \{0\}$ we can pick an orthonormal basis $\{s_j\}$ and define its reproducing kernel by
$$K_{\Phi}(z, z)=\sum_{j=0}^\infty |s_j(z)|^2 .$$
Consider the  function
\begin{equation}\label{epsilon}
\varepsilon_g(z)=e^{-\Phi(z)}K_{\Phi}(z, z).
\end{equation}
As suggested by the notation $\varepsilon _g$ depends only on the metric $g$ and not on the choice of the K\"ahler potential $\Phi$. In fact, if $\Phi'=\Phi-\re(\varphi)$, for some holomorphic function $\varphi$, is another potential for $\omega$, we have $e^{-\Phi'}=e^{-\Phi}|e^{\varphi}|^2$. Furthermore, since $\varphi$ is holomorphic and $\de\bar\de \Phi'=\de\bar\de \Phi$,  $e^-\varphi$ is an isomorphism between $\hilb_\Phi$ and $\hilb_{\Phi'}$, and thus we can write $K_{\Phi'}(z, z)=|e^{-\varphi}|^2K_{\Phi}(z, z) $, where $K_{\Phi}(z, z)$ (resp. $K_{\Phi'}(z, z)$) is the reproducing kernel of $\hilb_\Phi$ (resp. $\hilb_{\Phi'}$). It follows that $e^{-\Phi(z)}K_{\Phi}(z, z)=e^{-\Phi'(z)}K_{\Phi'}(z, z)$, as claimed.

\vskip 0,3cm

\noindent
{\bf Definition.}
{\em Let $g$ be a  \K\  metric  on a  complex domain $\Omega\subset\C^n$  such that $\omega =\frac{i}{2}\partial\bar\partial\Phi$. The metric $g$ is \emph{balanced} if  the function $\varepsilon_g$ is a positive  constant.}

In the literature the function $\varepsilon_g$ was first introduced under the name of $\eta$-{\em function} by J.  H. Rawnsley in \cite{rawnsley}, later renamed as $\varepsilon$-{\em function} in \cite{cgr1}. It  also appears under the name of {\em distortion function} for the study of Abelian varieties by  J. R. Kempf \cite{kempf} and S. Ji \cite{ji}, and for complex projective varieties by S. Zhang \cite{zhang} (see also  \cite{gramchev} and references therein). The definition of balanced metrics was given in the compact case by Donaldson  and in  the noncompact case 
by the first author together with C. Arezzo in \cite{arlcomm} (see also \cite{arlquant}).

In this section  we are interested in studying the balanced metrics  when   $(\Omega, g)$ is a homogeneous bounded domain, namely 
$\Omega \subset \C^n$ is   a bounded domain, $g$ a  K\"ahler metric on $\Omega$ and the Lie  group $G=\Aut (\Omega) \cap \Isom  (\Omega,g)$ acts trasitively on $\Omega$, 
where $\Aut (\Omega)$ (resp. $\Isom (\Omega, g)$) denotes the group of invertible holomorphic maps (resp. $g$-isometries) of $\Omega$.
In this case, it is well-known that $\Omega$ is contractible and hence $\omega=\frac{i}{2}\de \deb \Phi$,
for a globally defined \K\ potential $\Phi$.
We start with a lemma.
\begin{lem}\label{lembal}
Let $(\Omega, g, \omega)$ be a bounded homogeneous domain,
$\lambda$  a positive real number and  
\begin{equation}\label{hilbertspace}
\hilb_{\lambda}:=\hilb_{\lambda\Phi}=\left\{ s\in\ol(\Omega) \ | \ \, \int_\Omega e^{-\lambda\Phi}|s|^2\frac{\omega^n}{n!}<\infty\right\}.
\end{equation}
 Then   $\hilb_\lambda\neq \{0\}$ if and only if $\lambda g$ is a balanced metric.
\end{lem}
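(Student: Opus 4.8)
The plan is to prove both implications by relating the condition $\hilb_\lambda\neq\{0\}$ to the positivity and constancy of the Rawnsley function $\varepsilon_{\lambda g}(z)=e^{-\lambda\Phi(z)}K_{\lambda\Phi}(z,z)$, where $K_{\lambda\Phi}$ is the reproducing kernel of $\hilb_\lambda$. One implication is immediate: if $\lambda g$ is balanced then $\varepsilon_{\lambda g}$ is a positive constant, so $K_{\lambda\Phi}$ cannot vanish identically and $\hilb_\lambda\neq\{0\}$ (were $\hilb_\lambda=\{0\}$, the empty orthonormal basis would force $K_{\lambda\Phi}\equiv 0$ and $\varepsilon_{\lambda g}\equiv 0$, not a positive constant). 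The content is therefore the converse: assuming $\hilb_\lambda\neq\{0\}$, I will show that $\varepsilon_{\lambda g}$ is a positive constant, i.e.\ that $\lambda g$ is balanced.

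The engine of the converse is the invariance of $\varepsilon_{\lambda g}$ under the group $G=\Aut(\Omega)\cap\Isom(\Omega,g)$. Fix $\phi\in G$. Since $\phi$ is a holomorphic isometry, $\phi^*\omega=\omega$, hence $\de\deb(\Phi\circ\phi-\Phi)=0$; because $\Omega$ is contractible the pluriharmonic function $\Phi\circ\phi-\Phi$ is the real part of a globally defined holomorphic function $f_\phi$ on $\Omega$, so that $\Phi\circ\phi=\Phi+\re(f_\phi)$. I would then introduce the operator
$$U_\phi s=e^{-\lambda f_\phi/2}\,(s\circ\phi),\qquad s\in\hilb_\lambda,$$
which is well defined and holomorphic since $f_\phi$ is single-valued. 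A change of variables shows $U_\phi$ is unitary: using $|e^{-\lambda f_\phi/2}|^2=e^{-\lambda\re(f_\phi)}$ together with $\Phi+\re(f_\phi)=\Phi\circ\phi$, the weighted integrand rearranges as $|U_\phi s|^2e^{-\lambda\Phi}\omega^n/n!=|s\circ\phi|^2e^{-\lambda(\Phi\circ\phi)}\omega^n/n!$, and since $\phi^*\omega=\omega$ this is exactly the $\phi$-pullback of $|s|^2e^{-\lambda\Phi}\omega^n/n!$; invariance of the integral under the (orientation-preserving) diffeomorphism $\phi$ then gives $\|U_\phi s\|=\|s\|$.

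Because $U_\phi$ is unitary it carries an orthonormal basis $\{s_j\}$ of $\hilb_\lambda$ to another orthonormal basis $\{U_\phi s_j\}$, and since the reproducing kernel is independent of the chosen basis,
$$K_{\lambda\Phi}(z,z)=\sum_j|(U_\phi s_j)(z)|^2=e^{-\lambda\re(f_\phi(z))}K_{\lambda\Phi}(\phi(z),\phi(z)).$$
Multiplying by $e^{-\lambda\Phi(z)}$ and using $\Phi(z)+\re(f_\phi(z))=\Phi(\phi(z))$ yields $\varepsilon_{\lambda g}(z)=\varepsilon_{\lambda g}(\phi(z))$, i.e.\ $\varepsilon_{\lambda g}$ is $G$-invariant. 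As $G$ acts transitively on $\Omega$, the function $\varepsilon_{\lambda g}$ is constant. Finally, since $\hilb_\lambda\neq\{0\}$ there is a nonzero holomorphic $s$, which cannot vanish on an open set, so $K_{\lambda\Phi}(z_0,z_0)>0$ for some $z_0$ and the constant value of $\varepsilon_{\lambda g}$ is strictly positive; hence $\lambda g$ is balanced. The only delicate points are the passage from $\phi^*\omega=\omega$ to the globally defined holomorphic $f_\phi$ (where contractibility of $\Omega$ is essential) and the careful bookkeeping of weights in the unitarity computation; neither is a genuine obstacle, and transitivity of $G$ does the rest.
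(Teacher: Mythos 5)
Your proof is correct and follows essentially the same route as the paper: both establish the $G$-invariance of Rawnsley's function $\varepsilon_{\lambda g}$ (the paper by invoking the previously proved independence of $\varepsilon$ from the choice of K\"ahler potential applied to $\Phi\circ h$, you by explicitly constructing the unitary intertwiner $U_\phi$ and tracking the reproducing kernel) and then conclude by transitivity of $G$ together with positivity of the kernel at some point. Your version merely unpacks the unitarity computation that the paper leaves implicit.
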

\begin{proof}
 If $\hilb_\lambda =\{0\}$  then  $\ep_{\lambda g}$ equals the constant zero.
If  $\hilb_\lambda\neq \{0\}$  then the reproducing kernel $K_{\lambda\Phi}$ is not trivial and therefore $\ep_{\lambda g}$ have to be positive at same point. For every $\g\in G$ we have that $\lambda \widetilde \Phi(z) =\lambda \Phi \circ \g$ is a \K\ potential for $\lambda g$ and $K_{\lambda\tilde\Phi}(z,\overline z)=K_{\lambda\Phi}(\g\cdot z,\overline{ \g\cdot z})$ is the reproducing kernel of $\hilb_{\lambda}$. 
Therefore
\begin{equation*}
\begin{split}
\ep_{\lambda g}(z)&=e^{-\lambda\phi(z)} K_{\lambda\Phi}(z,\overline z)=e^{-\lambda\widetilde\phi(z)} K_{\lambda\tilde\Phi}(z,\overline z)\\
&=e^{-\lambda\phi(\g\cdot z)} K_{\lambda\Phi}(\g\cdot z,\overline{ \g\cdot z})=\ep_{\lambda g}(\g\cdot z),
\end{split}
\end{equation*}
since $G$ acts transitively on $\Omega$, we conclude that  $\ep_{\lambda g}$ is a positive constant.
\end{proof}

We are now in the position to state and proof the main result of this section.
\begin{thm} \label{thm:balanced}
Let $(\Omega, g)$ be a homogeneous bounded domain. Then there exists a positive constant $\lambda_0$
such that $\lambda g$ is balanced if and only if $\lambda\geq\lambda_0$.
\end{thm}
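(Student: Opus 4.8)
The plan is to reduce the "balanced iff $\lambda \geq \lambda_0$" statement to a statement about the convergence of an integral, using Lemma~\ref{lembal}. By that lemma, $\lambda g$ is balanced precisely when $\hilb_\lambda \neq \{0\}$, so the entire theorem amounts to showing that the set of $\lambda > 0$ for which the weighted Bergman space $\hilb_\lambda$ is nontrivial is exactly a half-line $[\lambda_0, \infty)$. Thus I would first establish a \emph{monotonicity} property: if $\hilb_\lambda \neq \{0\}$ for some $\lambda$, then $\hilb_\mu \neq \{0\}$ for all $\mu \geq \lambda$. The natural way to see this is that enlarging $\lambda$ makes the weight $e^{-\lambda \Phi}$ decay faster, which should only help integrability of holomorphic functions near the boundary; concretely, since $\Omega$ is bounded and $\Phi$ is bounded below on $\Omega$ (being continuous on a domain with a nice exhaustion, or after normalizing the potential), multiplying by $e^{-(\mu-\lambda)\Phi}$ with $\mu \geq \lambda$ keeps a nonzero square-integrable holomorphic function square-integrable. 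This gives that $\{\lambda : \hilb_\lambda \neq \{0\}\}$ is upward-closed.

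\textbf{Existence of a threshold.} Having shown upward-closedness, I would set
\begin{equation*}
\lambda_0 := \inf\{\lambda > 0 : \hilb_\lambda \neq \{0\}\},
\end{equation*}
and the task becomes twofold: to show this set is nonempty (so $\lambda_0 < \infty$) and that $\hilb_{\lambda_0}$ itself is nontrivial (so the half-line is closed at $\lambda_0$). For nonemptiness, I would exploit that $\Omega$ is bounded and homogeneous: bounded domains carry a nontrivial Bergman space, and for large $\lambda$ the weight $e^{-\lambda\Phi}$ is even more favorable, so constants or the reproducing kernel of the unweighted Bergman space furnish elements of $\hilb_\lambda$ for all sufficiently large $\lambda$. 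This shows $\lambda_0 < \infty$.

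\textbf{The main obstacle} is closing the interval at the left endpoint, i.e. showing $\hilb_{\lambda_0} \neq \{0\}$ rather than merely $\hilb_\mu \neq \{0\}$ for all $\mu > \lambda_0$. This is exactly where the homogeneity and the explicit structure theory of bounded homogeneous domains must enter, and it is the content attributed to H.~Ishi in the Acknowledgments. The difficulty is that integrability of $\int_\Omega e^{-\lambda\Phi}|s|^2 \frac{\omega^n}{n!}$ can fail precisely at the critical exponent $\lambda_0$; to rule this out (or to pin down $\lambda_0$ explicitly) one needs the precise behavior of $\Phi$ and of the volume form near the boundary. The plan here is to use the description of $\Omega$ as a domain admitting a simply transitive split solvable group of affine automorphisms (the Koszul--Vinberg/normal $j$-algebra picture), under which $\Phi$ and the measure decompose into explicit one-variable factors along the root decomposition; the integral $\hilb_\lambda$ then factors as a product of elementary integrals, each convergent exactly on an explicit half-line in $\lambda$, and $\lambda_0$ emerges as the maximum of the finitely many critical exponents. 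Because each factor is a genuine (Gamma-function–type) integral that converges at its own critical value, the critical space $\hilb_{\lambda_0}$ is nontrivial, which closes the half-line on the left and simultaneously yields the explicit $\lambda_0$ promised in the Introduction.
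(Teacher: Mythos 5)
Your reduction via Lemma~\ref{lembal} and your identification of where the real work lies (determining for which $\lambda$ the space $\hilb_\lambda$ is nontrivial, using the split solvable group acting simply transitively on $\Omega$) both match the paper. But the proposal stops exactly where the content of the theorem begins. The paper's proof (due to Ishi) transports $\hilb_{\lambda\Phi}$ by an explicit isometry $\iota^\sharp$ onto a space $\Hil(S,f)$ of holomorphically induced functions on the solvable group $S$ (Proposition~\ref{prop:polarization}), and then invokes the Rossi--Vergne nonvanishing criterion $\gamma_k>1+p_k+b_k+q_k/2$ for all $k$, where $\gamma=\sum_k\gamma_k\alpha_k$ encodes the metric through the root decomposition of $\gs$. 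Since rescaling $g$ by $\lambda$ rescales each $\gamma_k$ to $\lambda\gamma_k$, the admissible set is automatically a half-line with endpoint $\lambda_0=\max_k(1+p_k+b_k+q_k/2)/\gamma_k$; no separate monotonicity argument is needed. Your sketch names this machinery (``the normal $j$-algebra picture'') but supplies neither the identification of $\hilb_{\lambda\Phi}$ with a representation-theoretic object nor the nonvanishing criterion, which together \emph{are} the proof.

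Two of the steps you do spell out are also problematic. First, the monotonicity argument requires a K\"ahler potential bounded below; the canonical potential produced by the structure theory satisfies $\Phi((\exp X)\cdot p_0)=-\gamma(X)$, a nonzero linear form in exponential coordinates, hence unbounded below, and the existence of a bounded-below representative in the class $\Phi+\re(\varphi)$ is not obvious (it is essentially condition (2) of the introduction, which the paper establishes only \emph{after}, and using, Theorem~\ref{thm:balanced}). Second, and more seriously, your closing claim that $\hilb_{\lambda_0}$ itself is nontrivial because Gamma-type integrals ``converge at their own critical value'' is false: $\int_0^1 t^{s-1}\,dt$ diverges at $s=0$. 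For the unit disk with the Bergman metric one computes directly that $\hilb_\lambda\neq\{0\}$ if and only if $\lambda>1/2$, an \emph{open} half-line. (The Rossi--Vergne criterion as quoted in the paper is likewise a strict inequality, so the closedness at $\lambda_0$ asserted in the statement is delicate in any case; but your justification of it does not work.)
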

Thanks to Lemma \ref{lembal} we are reduced to show Theorem \ref{thm:hilbert} below. This theorem is known to harmonic analysis  specialists and it is essentially  contained in \cite{RV73}.
We present here a (unpublished) proof of this result  due to  H. Ishi  who kindly gave to the authors  the possibility to insert it in this paper.
\begin{thm} \label{thm:hilbert}
There exists $\lambda_0$ such that the Hilbert space 
$\hilb_{\lambda}$ is not trivial if and only if $\lambda \geq  \lambda_0$.
\end{thm}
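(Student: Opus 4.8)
The plan is to combine the structure theory of homogeneous bounded domains with the harmonic analysis of weighted Bergman spaces on Siegel domains. By the Vinberg--Piatetski-Shapiro theory, $\Omega$ is biholomorphic to a homogeneous Siegel domain $D$, on which a split solvable Lie group $S=N\rtimes A$ (with $A\cong(\R_{>0})^r$, $r=\rank\Omega$) acts \emph{simply transitively} by affine biholomorphisms. A biholomorphism $F\colon\Omega\to D$ sends $\hilb_\lambda$ isometrically, after multiplication by the nowhere-vanishing holomorphic Jacobian $\det F'$, onto the weighted Bergman space of $D$ with potential $\Phi\circ F^{-1}$ (still a potential for a homogeneous form, since $\omega$ is homogeneous); in particular non-triviality is preserved, so I may work on $(D,S)$. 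Using the $S$-invariance of $\omega$ exactly as in Lemma \ref{lembal}, the potential transforms by $\Phi(s\cdot z)-\Phi(z)=\re\psi_s(z)$ with $\psi_s$ holomorphic, which forces $e^{-\lambda\Phi}$ to be a product of powers of the basic relative invariants $\Delta_1,\dots,\Delta_r$ of the cone $V$ underlying $D$, and identifies $\frac{\omega^n}{n!}$ with a constant multiple of left Haar measure.

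The main step is to diagonalize the norm by a Paley--Wiener transform adapted to $D$. Taking the Fourier--Laplace transform in the ``tube'' directions, a holomorphic function $f$ on $D$ is represented by a function $\widehat f$ on the dual cone $V^{*}$ (together with fiber data in the type II case), and the Plancherel theorem turns the norm into
\begin{equation*}
\int_D e^{-\lambda\Phi}\,|f|^2\,\frac{\omega^n}{n!}
= \int_{V^{*}} |\widehat f(\xi)|^2\, G_\lambda(\xi)\,d\xi ,
\qquad
G_\lambda(\xi)=\int_{V} e^{-\lambda\Phi(y)-2\langle y,\xi\rangle}\,dy .
\end{equation*}
Thus $\hilb_\lambda\neq\{0\}$ if and only if $G_\lambda(\xi)<\infty$ for $\xi$ in the interior of $V^{*}$: if it is finite one produces a nonzero $f$ by choosing $\widehat f$ with compact support there, and if it is identically infinite then every nonzero $f$ (whose transform $\widehat f$ cannot vanish on a set of positive measure) has infinite norm.

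Next I would evaluate $G_\lambda$. In horospherical coordinates on $V$ given by the triangular group $N_V\rtimes A$, the invariant measure and the powers of the $\Delta_j$ are monomials in the $A$-variables, so $G_\lambda(\xi)$ factorizes into a convergent Gaussian-type integral (over $N_V$ and the fiber) times a product over $j=1,\dots,r$ of one-dimensional Gamma integrals
\begin{equation*}
\int_0^{\infty} t^{\,c_j(\lambda)}\,e^{-t}\,\frac{dt}{t},
\end{equation*}
where each $c_j(\lambda)$ is an explicit affine function of $\lambda$ with positive slope, the coefficients being the root multiplicities (the $T$-algebra structure constants) of $D$. This is precisely the Gindikin gamma function of the cone. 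Each factor converges exactly when $c_j(\lambda)>0$, i.e.\ when $\lambda>\lambda_j$; setting $\lambda_0=\max_j\lambda_j$ gives $\hilb_\lambda\neq\{0\}$ for $\lambda>\lambda_0$, and monotonicity is automatic because the $c_j$ are increasing in $\lambda$ (so the family of admissible $\lambda$ is an up-set). The value $\lambda=\lambda_0$ itself, where one Gamma factor sits at its convergence boundary, must be inspected separately to decide whether the threshold is attained.

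The hardest parts are establishing the Paley--Wiener--Plancherel isomorphism in the \emph{non-symmetric} homogeneous setting --- where one cannot invoke Jordan-algebra machinery and must instead run the computation through the normal $j$-algebra / $T$-algebra structure of $S$ --- and carrying out the bookkeeping of root multiplicities precisely enough to identify $\lambda_0$ and to settle the boundary case $\lambda=\lambda_0$. This harmonic-analytic core is exactly the content of \cite{RV73}.
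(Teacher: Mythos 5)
Your proposal reaches the same threshold and is correct in outline, but it travels a genuinely different road from the paper's. The paper never passes to the Siegel-domain realization or to a Fourier--Laplace/Plancherel decomposition over the dual cone. Instead it takes the split solvable group $S$ acting simply transitively on $\Omega$ (Dorfmeister \cite{D85}), constructs the explicit invariant potential $\Phi=\log\Psi$ with $\Psi((\exp X)\cdot p_0)=e^{-\gamma(X)}$, and uses the associated character $\chi$ of $S$ to build an isometry $\iota^{\sharp}:\hilb_{\Phi}\to L^2(S)$. The key step (Proposition \ref{prop:polarization}) identifies the image of $\iota^{\sharp}$ with the holomorphically induced subspace $\Hil(S,f)$ of $L^2(S)$ cut out by the covariance condition $R(Z)\varphi=-\iu f(Z)\varphi$ for the positive polarization $\gs_-$ at $f=-2\beta$; the non-vanishing of $\Hil(S,f)$ is then read off verbatim from Rossi--Vergne \cite[Theorem 4.26]{RV73} as the inequalities $\lambda\gamma_k>1+p_k+b_k+q_k/2$, giving $\lambda_0=\max_k(1+p_k+b_k+q_k/2)/\gamma_k$. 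Your route --- relative invariants, Paley--Wiener transform, Gindikin gamma function --- is essentially a re-derivation of the interior of \cite{RV73}: the convergence conditions $c_j(\lambda)>0$ on your one-dimensional Gamma integrals are exactly how Rossi and Vergne prove their Theorem 4.26. What the paper's formulation buys is that one never has to establish the Plancherel isomorphism in the non-symmetric type II setting (which, as you correctly identify, is where all the difficulty lives); one only verifies the covariance condition, a one-line Leibniz-rule computation, and then quotes the ready-made criterion. What your formulation would buy is an explicit Gamma-function formula for the weighted Bergman norm, which the theorem does not require. One further remark: your instinct to treat the boundary $\lambda=\lambda_0$ separately is sound --- the Rossi--Vergne criterion is a strict inequality, so the set of admissible $\lambda$ is the open ray $\lambda>\lambda_0$, and the ``$\lambda\geq\lambda_0$'' in the statement should be read with that in mind; the paper itself does not address this endpoint.
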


\begin{proof}
By \cite[Theorem 2 (c)]{D85},
 there exists a connected split solvable Lie subgroup
 $S \subset G$
 acting simply transitively on the domain $\Omega$.
We shall then  reduce the argument to harmonic analysis on the solvable Lie group $S$,
 and apply the results of \cite{RV73}.
 As first step we shall find a specific K\"ahler potential $\Phi$ of $\omega$
 following Dorfmeister \cite{D85}
 (see also \cite[Proof of Theorem 4]{discala}).

Taking a reference point $p_0 \in \Omega$,
 we have a diffeomorphism
\begin{equation} \label{eqn:def_of_iota}
 \iota : S \owns \g \overset{\sim}{\mapsto} \g \cdot p_0 \in \Omega,
\end{equation}
 and
 by  differentiation,
 we get the linear isomorphism
 $\gs := \mathrm{Lie}(S) \owns X
  \overset{\sim}{\mapsto} X \cdot p_0 \in T_{p_0}\HBD \equiv \C^n$.
Then the evaluation of the K\"ahler form
 $\omega$ on $T_{p_o}\Omega$
 is given by
\begin{equation} \label{eqn:beta}
 \omega(X\cdot p_0, Y \cdot p_0) = \beta([X,Y])\qquad
 (X, Y \in \gs)
\end{equation}
 with a certain linear form $\beta \in \gs^*$.
Let $j : \gs \to \gs$
 be the linear map defined in such a way that
 $(jX) \cdot p_0 = \sqrt{-1} (X \cdot p_0)$ for $X \in \gs$. 
We have
 $g(X \cdot p_0,\,Y \cdot p_0) = \beta([jX, Y])$ 
 for $X, Y \in \gs$,
 and the right-hand side defines a positive inner product on $\gs$.
Let $\mathfrak{a}$ be the orthogonal complement of $[\gs, \gs]$ in $\gs$
 with respect to this inner product.
Then $\mathfrak{a}$ is a commutative Cartan subalgebra of $\gs$.
Define $\gamma  \in \mathfrak{a}^*$ by
 \begin{equation} \label{eqn:def_of_gamma}
 \gamma(X) := -4 \beta (jX)
 \end{equation}
 for $X \in \mathfrak{a}$,
 and we extended $\gamma$ to $\gs = \mathfrak{a} \oplus [\gs, \gs]$
 by the zero-extension.
Keeping (\ref{eqn:def_of_iota}) in mind,
 we define a positive smooth function $\Psi$ on $\Omega$ by
\begin{equation} \label{eqn:def_of_Psi}
 \Psi((\exp X) \cdot p_0) = e^{-\gamma(X)} \quad (X \in \gs).
\end{equation}
From the argument in \cite[pp. 302--304]{D85},
 we see that
\begin{equation} \label{eqn:globalpotential}
 \omega = \frac{i}{2}\partial\bar{\partial}\log \Psi.
\end{equation}
It follows that
 $\Phi := \log \Psi$ is a K\"ahler potential of $\omega$.

Noting that $\gamma([\gs, \gs]) = 0$ by definition.
We define the one-dimensional representation
 $\chi : S \to \R^+$
 of $S$ by
 $\chi(\exp X) := e^{-\gamma(X)/2}\,\,\,(X \in \gs).$
Then (\ref{eqn:def_of_Psi}) is rewritten as
 \begin{equation} \label{eqn:Phi-chi}
 e^{-\Phi(\g \cdot p_0)} = \chi(\g)^{-2} \qquad (\g \in S),
 \end{equation}
 so that we have
 $e^{-\Phi(\g \cdot p)} = \chi(\g)^{-2} e^{-\Phi(p)} \,\,\, (p \in \HBD,\,\,\g \in S)$.
%
%
%
%
Let $L^2(S)$ be the Hilbert space of square integrable functions on $S$
 with respect to the left Haar measure $dg$
 for which $\iota_*dg$ equals the $S$-invariant measure $\frac{\omega^n}{n!}$ on $\Omega$.
Then we have the isometry
 $\iota^{\sharp} : {\mathcal H}_{\Phi} \to L^2(S)$
 defined by
 \begin{equation} \label{eqn:def_of_iotasharp}
 \iota^{\sharp} F(\g)
 := e^{- \Phi\circ \iota (\g)/2} F \circ \iota(\g)
  = e^{- \Phi(\g \cdot p_0)/2} F(\g \cdot p_0),
 \end{equation}
for  $\g \in S$ and $F \in {\mathcal H}_{\Phi}$.
We shall give an analytic description of
 the image of $\iota^{\sharp}$ in $L^2(S)$.
For $X \in \gs$,
 we denote by $R(X)$
 the corresponding left invariant vector field
  on the Lie group $S$.
Namely,
 for $\varphi \in C^{\infty}(S)$ we have
 $R(X)\varphi(\g) := \frac{d}{dt}|_{t=0} \varphi(\g \exp t X)$, $(\g \in S)$.
For $Z = X + \iu Y \in \gs_{\C}$ with $X, Y \in \gs$,
 we define $R(Z) := R(X) + \iu R(Y)$.
Let $\gs_-$ be the subspace
 $\set{X + \iu j X}{X \in \gs}$
 of $\gs_{\C}$.
Then we have a linear isomorphism $\gs_- \owns Z \mapsto Z \cdot p_0 \in T_{p_0}^{(0,1)}\Omega$,
 so that the push-forward $\iota_*R(Z)$ is
 an $S$-invariant anti-holomorphic vector field
 on the complex domain $\Omega$
 for each $Z \in \gs_-$.
Thus $\gs_-$ is a complex Lie subalgebra of $\gs_{\C}$.

Let $f$ denote the linear form $-2\beta$ on $\gs$. We now pause to prove the following proposition.


\begin{prop} \label{prop:polarization}
{\rm (i)}
The subalgebra $\gs_- \subset \gs_{\C}$ is a positive polarization at $f$,
 that is,
 $f([\gs_-, \gs_-]) = 0$
 and $\iu f([Z, \bar{Z}]) \ge 0$ for all $Z \in \gs_-$.\\
{\rm (ii)}
The image of $\iota^{\sharp}$
 equals the function space
 \footnote{The function space $\Hil(S,f)$ is fundamental in theory of holomorphic induction,
 which is closely related to Kostant's geometric quantization \cite{AK71}.}
\[
 \Hil(S,f) := \set{\varphi \in L^2(S)}{R(Z)\varphi = - \iu f(Z) \varphi \mbox{ for all }Z \in \gs_-}.
\]
\end{prop}
\noindent
\pf 
(i) For $Z, Z' \in \gs_-$,
 we have by \eqref{eqn:beta}
\[
 f([Z,Z']) = -2\omega(Z \cdot p_0, Z' \cdot p_0) = 0
\]
 because $\omega$ is $(1,1)$-form.
Similarly,
 we have for $Z = X + \iu jX \in \gs_-$ with $X \in \gs$,
 \[
 \iu f([Z, \bar{Z}]) = 2 \iu \omega(\bar{Z} \cdot p_0, Z \cdot p_0) = 4 g(X \cdot p_0, X\cdot p_0)/2 \ge 0.
\]
(ii)
We take $\varphi = \iota^{\sharp}F \in \mathrm{Image}\,\iota^{\sharp}$
 with $F \in L^2_{hol}(\HBD, e^{-\Phi}\frac{\omega^n}{n!})$.
By (\ref{eqn:Phi-chi}) and (\ref{eqn:def_of_iotasharp}),
 we obtain
\[
 \varphi(\g) = \chi(\g)^{-1} F(\iota(\g)), \ (\g \in G).
\]
By the Leibniz rule,
 we have for $Z = X + \iu j X \in \gs_-\,\,(X \in \gs)$
\[
 R(Z)\varphi = R(Z)\chi^{-1} \cdot F \circ \iota + \chi^{-1} \cdot R(Z) F \circ \iota.
\]
Since $F$ is holomorphic,
 we have
 $R(Z) F \circ \iota = \iota_*R(Z)F = 0$.
Noting that
 $\chi^{-1/2}$ is a one-dimensional representation,
 we have
 \[
 R(X)\chi^{-1} = \frac{\gamma(X)}{2} \chi^{-1} = f(jX) \chi^{-1}
 \]
 because
 \begin{equation} \label{eqn:gamma-f}
 \gamma(X) = - 4\beta(jX) = 2 f(jX).
 \end{equation}
Indeed,
 we may assume that
 (\ref{eqn:def_of_gamma}) holds for all $X \in \gs_-$
 (\cite[Page 362]{RV73}).
Then $R(jX) \chi^{-1} = -f(X) \chi^{-1}$,
 so that $R(Z) \chi^{-1} = - \iu f(Z) \chi^{-1}$.
Thus we have $\mathrm{Image}\, \iota^{\sharp} \subset \Hil(S,f)$.
The converse inculsion can be shown similarly.
\qed
$ $\\

By Proposition ~\ref{prop:polarization} (ii),
 the non-vanishing condition of $\hilb_{\Phi}$ is equivalent to the one of $\Hil(S,f)$,
 and the latter is completely determined by  \cite[Theorem 4.26]{RV73}.
In order to apply the results in \cite{RV73},
 we need a root structure of the Lie algebra $\gs$ with respect to $\mathfrak{a}$
 due to Piatetskii-Shapiro \cite{PS69}.
For a linear form $\alpha$ on the Cartan algebra $\mathfrak{a}$,
 we denote by $\gs_{\alpha}$ the subspace
 $\set{X \in \gs}{[C,X] = \alpha(C)X \,\, (\forall C \in \mathfrak{a})}$
 of $\gs$.
We say that $\alpha$ is a \textit{root}
 if $\gs_{\alpha} \ne \{0\}$ and $\alpha \ne 0$.
Thanks to \cite[Chapter 2, Section 3]{PS69} or \cite[Theorem 4.3]{RV73},
 there exists a basis
 $\{\alpha_1, \dots, \alpha_r\}$, $(r := \dim \mathfrak{a})$
 of $\mathfrak{a}^*$
 such that every root is one of the following:
\[
 \alpha_k, \ \alpha_k/2, \ (k=1, \dots, r), \qquad (\alpha_l \pm \alpha_k)/2, \ (1 \le k < l \le r).
\]
If $\{A_1, \dots, A_r\}$ is
 the basis of $\mathfrak{a}$ dual to $\{\alpha_1, \dots, \alpha_r\}$,
 then $\gs_{\alpha_k} = \R jA_k$.
Thus $\gs_{\alpha_k}\,\,(k=1, \dots, r)$ is always one dimensional,
 whereas other root spaces $\gs_{\alpha_k/2}$ and $\gs_{(\alpha_l \pm \alpha_k)/2}$ may be $\{0\}$.
We put for $k=1, \dots, r$
 \[
 p_k := \sum_{i<k} \dim \gs_{(\alpha_k - \alpha_i)/2}, \
 q_k := \sum_{l>k} \dim \gs_{(\alpha_l - \alpha_k)/2}, \
 b_k := \frac{1}{2}\dim \gs_{\alpha_k/2},
 \]
 see \cite[Definition 4.7]{RV73} and \cite[(2.7)]{N01}.

Since $\{\alpha_1, \dots, \alpha_r\}$ is a basis of $\mathfrak{a}^*$,
 the linear form $\gamma$ is written as
 $\gamma = \sum_{k=1}^r \gamma_k \alpha_k$
 with unique $\gamma_1, \dots, \gamma_r \in \R$,
 where $\alpha_k$'s are extended to $\gs$ by the zero-extension.
Since $j A_k \in \gs_{\alpha_k}$,
 we obtain

\begin{equation*}
\begin{split}
 \gamma_k &= \gamma(A_k) = -4 \beta (jA_k) = -4 \beta([A_k, jA_k])= 4 \beta([jA_k, A_k])\\ &
  = 4 g (A_k \cdot p_0, A_k \cdot p_0) >0.
\end{split}
\end{equation*}
By (\ref{eqn:gamma-f}),
 we have $f = - \gamma \circ j /2 = \sum_{k=1}^r (- \gamma_k/2) \alpha_k \circ j$.
Now we get the following from \cite{RV73}.


\begin{prop}[\mbox{\cite[Theorem 4.26]{RV73}}] \label{thm:Rossi-Vergne}
The Hilbert space $\Hil(S,f)\neq 0$  if and only if
\[
 \gamma_k > 1 + p_k + b_k + q_k/2,\ (k=1, \dots, r).
\]
\end{prop}

\vskip 0,5cm

The proof  of Theorem \ref{thm:hilbert}  follows now easily by  the   proof of Proposition \ref{thm:Rossi-Vergne}
by replacing the Hilbert space ${\mathcal H}_{\Phi}$ by 
${\mathcal H}_{\lambda}={\mathcal H}_{\lambda\Phi}$
and by setting
\begin{equation} \label{eqn:def_of_lambda0}
 \lambda_0 := \max_{1 \le k \le r} \frac{1 + p_k + b_k + q_k/2}{\gamma_k}.
\end{equation}
\end{proof}

\begin{exmp}\rm
If $\omega$ is the K\"ahler form corresponding to the Bergman metric on $\Omega$,
 then $\gamma_k = 2 + p_k + q_k + b_k\,\,\,(k=1, \dots, r)$
 (see \cite[Theorem 5.1]{G64} or \cite[(2.19)]{N01}),
 so that
 \[
 \lambda_0 = \max_{1 \le k \le r} \frac{1 + p_k + b_k + q_k/2}{2 + p_k + q_k + b_k},
 \]
 which is found in \cite[Page 97]{N01}.
In particular,
 if $\Omega$ is a bounded symmetric domain, then there exists integers $a$ and $b$
 so that
 \[
 p_k = (k-1)a, \quad
 q_k = (r-k)a, \quad
 b_k = b, \quad
 \gamma_k = (r-1)a + b + 2.
 \]
Therefore 
\begin{equation*}
\begin{split}
 \lambda_0 &= \max_{1 \le k \le r} \frac{1 + (k-1)a + b + (r-k)a/2}{\gamma}
 = \frac{1 + (r-1)a + b}{\gamma}\\& = \frac{\gamma - 1}{\gamma},
\end{split}
\end{equation*}
which is consistent with \cite[Theorem 1]{LZ10}.
\end{exmp}

\begin{rmk}\rm
Since  a balanced metric is projectively induced (see e.g. \cite{cgr1} and  \cite{LZ10} or the proof of Theorem  \ref{mainteor} in the next  section), 
it is natural to ask for which $\lambda \in \R ^+$ the metric $\lambda g$ is projectively induced.
This problem was addressed and solved in
\cite[Theorem 2]{articwall}
for bounded symmetric domains and in  \cite[Theorem 4]{discala}
for the more general case of  homogeneous bounded domains considered in the present paper.  For completeness we briefly  recal  here the results obtained in \cite{discala}.
The crucial point is that the homogeneous  metric $\lambda g$ on a homogeneous bounded domain  is projectively induced if and only if the analytic extension  $e^{\lambda \Phi (z, \overline w)}$ of $e^{\lambda \Phi(z)}$ is the reproducing kernel for an Hilbert space. The condition for $e^{\lambda \Phi(z,\overline w)}$ to be a reproducing kernel is described in \cite{ishi1}.  
Then Theorem 15 in \cite{ishi2} tells us that
\begin{equation*}
\{0,c_0\}\cup(c_0, + \infty) \subset  W(g) \cup \{0\} \subset \set{\frac{q_k}{2\gamma_k}}{k=1, \dots, r} \cup (c_0, +\infty),
\end{equation*}
where $c_0 := \max \set{\frac{q_k}{2\gamma_k}}{k=1, \dots, r}$. Thefore $W(g)$ consists of a continuous part and of a discrete part with at most $r$ elements. It is interesting to note that in general the constant $c_0$ is different from the constant $\lmb_0$  in  Theorem \ref{thm:balanced}. This implies for example that,  on any homogeneous bounded domain,   there exist infinite (homothetic) projectively induced metrics which are not balanced.
\end{rmk}

\section{Berezin quantization and the proof of Theorem \ref{mainteor}}\label{quantization}
Let $(\Omega, \omega)$ be a symplectic manifold and let $\{\cdot, \cdot \}$ be the associated Poisson bracket.
A {\em Berezin quantization} (we refer to \cite{berezin} for details) on $\Omega$ is given by a family of associative algebras $\mathcal A_\hbar$
where the parameter $\hbar$ (which plays the role of Planck constant) range over a set $E$ of positive reals with limit point $0$.
Then in the direct sum $\oplus_{h \in E} \A_h$ with component-wise product $*$, one chooses a subalgebra ${\mathcal A}$, such that for an arbitrary element 
$f=f(\hbar)\in {\mathcal A}$, where $f(\hbar)\in {\mathcal A}_\hbar$, there exists a limit $\lim_{\hbar\rightarrow 0} f(\hbar)=\varphi (f)\in C^{\infty}(\Omega)$.
The following correspondence principle must  hold: for $f, g \in {\mathcal A}$
$$\varphi (f*g)=\varphi (f)\varphi (g),\    \    \  \varphi\left(\hbar^{-1}(f*g-g*f)\right)=i\{\varphi(f), \varphi (g)\}.$$
Moreover, for any pair of points $x_1,x_2 \in \Omega$ there exists $f \in \A $ such that $\varphi (f) (x_1) \neq \varphi(f)(x_2)$.
\vskip 0.3cm
Consider now a real analytic \K\ manifold $\Omega$, with \K\ metric $g$ and associated \K\ form $\omega$. Assume that there exists a (real analytic) {global \K\ potential} $\Phi :\Omega\rightarrow\R$.
This function  extends to a sesquianalytic function $\Phi(x,\ov y)$ on a neighbourhood  of the diagonal in $\Omega \times \Omega$ such that $\Phi(x,\ov x)=\Phi(x)$.  Consider  {\em Calabi's diastasis function} $D_g$ defined on a neighbourhood 
of the diagonal in $\Omega \times \Omega$ by:
\begin{equation*}
\qquad D_g(x,y)=\Phi(x,\ov x) + \Phi(y,\ov y) - \Phi(x,\ov y) - \Phi(y,\ov x).
\end{equation*}
By its definition we see that  Calabi's  diastasis function   is independent from the potential chosen which is defined up to the sum with the real part of a holomorphic function.
Moreover,  it is easily seen that $D_g$ is real-valued,  symmetric in $x$ and $y$  and $D_g(x, x)=0$
(the reader is referred to \cite{calabi}  and \cite{diastherm} for more details on the  diastasis function).

\begin{exmp}\label{diastproj}\rm
Let $g_{FS}$ be the Fubini--Study metric on the infinite dimensional
complex proective space $\C P^{\infty}$
of holomorphic sectional curvature
$4$ and let
$D_{g_{FS}}(p, q)$  be the associated Calabi's  diastasis function.
One  can show that 
for all $p\in \C P^{\infty}$
the  function $D_{g_{FS}}(p, \cdot)$ 
is globally defined except in the cut locus
$H_p$ of $p$ where it blows up.
Moreover $e^{-D_{g_{FS}}(p, q)}$
is globally defined and smooth on 
$\C P^{\infty}$,
$e^{-D_{g_{FS}}(p, q)}\leq 1$
and $e^{-D_{g_{FS}}(p, q)}= 1$
if and only if $p=q$ (see \cite{diastherm}  for details).
\end{exmp}

 \vskip 0.3cm
The following theorem is a reformulation  of Berezin   quantization result    (see Engli\v{s} \cite{englis})    in terms of Rawnsley $\ep$-function   and  Calabi's diastasis function.

\begin{thm}\label{thmberezin}
Let $\Omega\subset \C^n$ be a complex domain equipped with a real analytic  \K\ form $\omega$ and corresponding \K\ metric $g$.
Then, $(\Omega, \omega)$ admits a Berezin quantization if the following two  conditions are satisfied:
\begin{enumerate}
\item
Rawnsley's function $\ep_{\lambda g}(x)$ is a positive constant for all sufficiently large $\lambda$;
\item
the function $e^{-D_g(x, y)}$  is globally defined on $\Omega\times\Omega$,  $e^{-D_g(x, y)}\leq1$  and $e^{-D_g(x, y)}=1$,  if and only if $x=y$,
\end{enumerate}
\end{thm}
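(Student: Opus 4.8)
The plan is to recognize that, once the definitions of Rawnsley's $\ep$-function and Calabi's diastasis function are unwound, the two hypotheses of the statement coincide verbatim with the two conditions (1)--(2) listed in the Introduction under which Berezin constructed his quantization; the conclusion then follows by invoking Berezin's theorem in the form given by Engli\v{s} \cite{englis}. Thus the argument is essentially a dictionary between the classical reproducing-kernel formulation and the geometric $\ep$/diastasis formulation, together with a citation of the deep analytic result.

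First I would treat condition (1). By the very definition $\ep_{\lambda g}(x) = e^{-\lambda\Phi(x)}K_{\lambda\Phi}(x,\ov x)$ of Rawnsley's function, the assertion that $\ep_{\lambda g}$ is a positive constant $c_{\lambda}$ for all large $\lambda$ is literally the same as Berezin's requirement $K_{\lambda}(x,\ov x)=c_{\lambda}e^{\lambda\Phi(x)}$. Next I would treat condition (2). Since $D_g(x,y)=\Phi(x,\ov x)+\Phi(y,\ov y)-\Phi(x,\ov y)-\Phi(y,\ov x)$, we have $e^{-D_g(x,y)}=e^{-\Phi(x,\ov x)-\Phi(y,\ov y)+\Phi(x,\ov y)+\Phi(y,\ov x)}$, which is exactly the function appearing in Berezin's condition (2); its global definedness, the bound $\le 1$, and the equality case are then identical in the two formulations. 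The conceptual bridge that makes these the natural hypotheses is the identity
\begin{equation*}
\frac{|K_{\lambda}(x,\ov y)|^2}{K_{\lambda}(x,\ov x)\,K_{\lambda}(y,\ov y)} = e^{-\lambda D_g(x,y)},
\end{equation*}
obtained by analytically continuing $K_{\lambda}(x,\ov x)=c_{\lambda}e^{\lambda\Phi(x)}$ to $K_{\lambda}(x,\ov y)=c_{\lambda}e^{\lambda\Phi(x,\ov y)}$ off the diagonal and using that, $\Phi$ being real-valued, its sesquianalytic extension satisfies $\ov{\Phi(x,\ov y)}=\Phi(y,\ov x)$. The left-hand side is the squared overlap of normalized Berezin coherent states, so condition (2) says precisely that distinct points give coherent states of overlap strictly less than one; this is what underlies both the correspondence principle and the separation-of-points requirement in the definition of a Berezin quantization.

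Having matched the hypotheses, I would invoke Berezin's construction as reformulated by Engli\v{s}: for each sufficiently large $\lambda$ one forms the algebra $\A_{\hbar}$ (with $\hbar=\lambda^{-1}$) of covariant symbols of bounded operators on $\hilb_{\lambda}$, equipped with the Berezin star product, and one shows that as $\hbar\to 0$ the symbols converge to smooth functions on $\Omega$ satisfying $\varphi(f*g)=\varphi(f)\varphi(g)$ and $\varphi(\hbar^{-1}(f*g-g*f))=i\{\varphi(f),\varphi(g)\}$. The genuinely hard part is entirely contained in this last step and is not reproved here: it is the analytic heart of \cite{berezin} and \cite{englis}, where the asymptotic expansion of the Berezin transform and the semiclassical limit of the star product are established. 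Conditions (1) and (2) are exactly the geometric inputs that make that machinery run, with (2) controlling the off-diagonal decay of the kernel needed for the $\hbar\to 0$ asymptotics. Combining this with the verification above completes the proof.
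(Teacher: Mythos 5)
Your proposal matches the paper's treatment: the paper gives no independent proof of this theorem, presenting it explicitly as ``a reformulation of Berezin['s] quantization result (see Engli\v{s})'' in which conditions (1) and (2) are, after unwinding the definitions of $\ep_{\lambda g}$ and $D_g$, verbatim the two hypotheses of Berezin's construction stated in the Introduction. Your dictionary between the two formulations (including the coherent-state overlap identity) is exactly the intended content, and the deferral of the analytic heart to \cite{berezin} and \cite{englis} is what the paper does as well.
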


\vskip 0.3cm

We are now in the position to prove  Theorem \ref{mainteor}, namely that a bounded homogeneous domain admits a Berezin quantization.

\begin{proof}[Proof of Theorem \ref{mainteor}]
Condition  (1) in Theorem \ref{thmberezin}  is a consequence of  Theorem  \ref{thm:balanced} of  the previous section.
Hence,   it remains  to show that  (2) in  Theorem \ref{thmberezin} is satisfied by a homogeneous bounded domain $(\Omega, \omega, g)$
\footnote{It is well-known that $g$ is real analytic so it makes sense to verify (2).}.
Fix $\lambda\geq  \lambda_0$ with $\lambda_0$ given by Theorem \ref{thm:balanced}. 
Consider the  {\em coherent states map} (see \cite{rawnsley}), namely the holomorphic map $\varphi_{\lambda}:\Omega\rightarrow\C P^{\infty}$ from $\Omega$ into the infinite dimensional
complex projective space $\C P^{\infty}$ defined by
$$\varphi_{\lambda}: \Omega\f \C P^{\infty},\  x\mapsto [s_0^{\lambda}(x), \dots ,s_j^{\lambda}(x), \dots ],$$
where $\{s_j^{\lambda}\}_{j=0, \dots }$ is an orthonormal basis of the Hilbert space $\hilb_{\lambda}$ given by (\ref{hilbertspace}).
This map is well-defined since $\epsilon_{\lambda g}$ is a positive constant and hence, 
for a given $x\in\Omega$,  there exists $j_0$ such that $s_{j_0}$ does not vanish at $x$.
Moreover, the constancy of $\ep_{\lambda g}$ implies that 
$\varphi_{\lambda}^*g_{FS}=\lambda g$,
where $g_{FS}$ is the Fubini--Study  metric on $\C P^{\infty}$ (see \cite{rawnsley} for a proof).
In  other words,  the metric $\lambda g$ is projectively induced via
the  coherent states map. By Example \ref{diastproj},
Calabi's diastasis function $D_{g_{FS}}$ of $\C P^{\infty}$ is such that  $e^{-D_{FS}}$
is globally defined on $\C P^{\infty}\times\C P^{\infty}$ and by the hereditary property of the diastasis function
 (see \cite[Proposition  6 ]{calabi}) we get that, for all $x, y\in\Omega$,
\begin{equation}\label{fondequ}
e^{-D_{FS}(\varphi_{\lambda}(x), \varphi_{\lambda}(y) )}=e^{-D_{\lambda g}(x, y)}= e^{-\lambda D_{g}(x, y)}=\left(e^{- D_{g}(x, y)}\right)^{\lambda}
\end{equation}
 is globally defined on $\Omega\times\Omega$.
 Since, by Example \ref{diastproj}),   $e^{-D_{FS}(p, q)}\leq 1$ for all $p, q\in \C P^{\infty}$ it follows that $e^{-D_g(x, y)}\leq 1$ for  all
 $x, y\in \Omega$.
It remains to show that   $e^{-D_g(x, y)}=1$ iff $x=y$.  By formula (\ref{fondequ})   
and by the fact that $e^{-D_{FS}(p, q)}=1$ iff $p=q$ (again by Example \ref{diastproj})
this is equivalent to   the injectivity of the coherent states map  $\varphi_{\lambda}$. This, in turns, follows by a recent result  \cite[Theorem 3]{discala} of the first author together with A. J. Di Scala and H. Ishi  which asserts that   a   \K\ immersion  of a homogeneous \K\ manifold (not necessarily a  bounded domain) into a  finite or infinite dimensional complex projective space is  one to one.
\end{proof}

\vskip 0.5cm

\noindent
{\bf Final remark}
All the results of this paper can be formulated in term of geometric quantization tools, i.e. holomorphic line bundle, coherent states, charactersitic $2$-point function
as  in \cite{cgr1}, \cite{cgr2}, \cite{cgr3} and \cite{cgr4}.
For a bounded homogeneous domain $(\Omega, g, \omega )$ the  quantum line bundle, namely the holomorphic line bundle over $\Omega$ whose first Chern class  equals the  De Rham class   of  $\omega$, is trivial and so its global  holomorphic sections can be identified
with the holomorphic functions on $\Omega$.
A natural problem one could tries to solve is that of obtaining  a  quantization by deformation (or equivalently a $*$-product) of a bounded homogeneous domain starting from the Berezin quantization given by  Theorem \ref{mainteor} and  following the ideas developed in \cite{cgr4}  for the case of bounded symmetric domains. This is not a straightforward matter.  Indeed in  \cite{cgr4}   Cahen, Gutt and Rawnsley use the  
polydisk theorem while for  a general homogeneous bounded  domain no such  theorem seems to be  avaliable.  Moreover, to the authors  best knowledge,  a classification of  all bounded homogeneous   domains  is still missing.
We will  attack this problem in a forthcoming paper.
We finally point out   that  in  \cite{cgr2}  Cahen, Gutt and Rawnsley
prove the existence of a Berezin  $*$-product on any  compact flag manifold.
The results obtained  by  Cahen, Gutt and Rawnsley  for the compact case (see in particular  \cite[Proposition 3]{cgr2} which represents
the analogous of our  Theorem \ref{mainteor}  and  Theorem \ref{thm:balanced}) 
strongly rely on Kodaira's theory for  projective algebraic manifolds
which  can not be apply  to the noncompact context.

\end{document}